\ttfamily\color{blue!60!black},
\ttfamily\color{purple!70!black},
\ttfamily\color{green!50!black},
\ttfamily\color{cyan!50!black},
\newcommand{\bbC}{\mathbb{C}}
\newcommand{\bbN}{\mathbb{N}}
\newcommand{\bbP}{\mathbb{P}}
\newcommand{\bbZ}{\mathbb{Z}}
\newcommand{\Ann}{\mathrm{Ann}}
\newcommand{\HF}{h}
\newcommand{\rk}{\mathrm{rk}}
\newtheorem{theorem}{Theorem}[section]
\newtheorem{proposition}[theorem]{Proposition}
\newtheorem{lemma}[theorem]{Lemma}
\theoremstyle{definition}
\newtheorem{definition}[theorem]{Definition}
\newtheorem{question}[theorem]{Question}
\newtheorem{claim}[theorem]{Claim}
\theoremstyle{remark}
\newtheorem{remark}[theorem]{Remark}
\numberwithin{equation}{section}
\begin{document}

\title[Waring decompositions with different Hilbert functions]{Waring decompositions of special ternary forms\\ with different Hilbert functions}


\author{Elena Angelini}
\author{Luca Chiantini}
\address{Dip. di Ingegneria dell'Informazione e Scienze Matematiche, U. di Siena, Italy}
\curraddr{}
\email{elena.angelini@unisi.it, luca.chiantini@unisi.it}
\thanks{}

\author{Alessandro Oneto}
\address{Dip. di Mathematics, U. of Trento, Via Sommarive 14, 38123 Povo (Trento), Italy}
\curraddr{}
\email{alessandro.oneto@unitn.it}

\thanks{The authors are members of the Italian GNSAGA-INDAM}

\subjclass[2020]{Primary: 14C20, 14N07; Secondary: 13A02, 13C40, 14N05, 15A69}

\date{}

\begin{abstract}
  We prove the existence of ternary forms admitting apolar sets of points of cardinality equal to the Waring rank, but having different Hilbert function and different regularity. This is done exploiting liaison theory and Cayley-Bacharach properties for sets of points in the projective plane. 
\end{abstract}

\maketitle

\section{Introduction}
\subsection{Waring decompositions} Let $S = \bigoplus_{d\in\bbZ}S_d = \bbC[x_0,\ldots,x_n]$ be the standard graded polynomial ring in $n$ variables with complex coefficients.
\begin{definition}[Waring rank]\label{def:Waring}
  Given a degree-$d$ homogeneous polynomial $f \in S_d$, a \emph{Waring decomposition} of $f$ is an expression as sum of $d$-th powers of linear forms, i.e.,
\begin{equation}\label{eq:Waring_dec}
  f = \sum_{i=1}^r \ell_i^d, \quad \ell_i \in S_1.
\end{equation}
The minimal length $r$ of such a decomposition is the \emph{Waring rank}, denoted $\rk(f)$. 
\end{definition}
The study of Waring decompositions has its roots in the XIX century thanks to the works of Sylvester \cite{sylvester1851essay,sylvester1851remarkable}, Clebsch \cite{clebsch1861ueber}, Richmond \cite{richmond1904canonical}, Palatini \cite{palatini1902sulla,palatini1903sulla}, Terracini \cite{terracini1915sulla}, among many others. The question regained a lot of interest in the last decades. The celebrated theorem by Alexander and Hirschowitz in 1995 established the general Waring rank in any number of variables and degrees \cite{alexander1995polynomial}. The attention to Waring decompositions has been motivated also by its connection to \textit{tensor decompositions}; in this case, symmetric tensors. Indeed, tensor decompositions find a lot of uses in several applications, see for example \cite{kolda2009tensor,landsberg2012tensors}. The computation of most tensor decompositions is known to be an NP-hard problem and the same is conjectured fo symmetric tensors \cite{hillar2013most}. Therefore, a theoretical study of related algebraic and geometric structures can be useful to shed lights on them. 

\subsection{Apolar sets of points}
Waring decompositions have a geometric interpretation in terms of \emph{Veronese varieties}. Consider the \emph{Veronese embedding} given~by 
\[
  \nu_d : \bbP S_1 \longrightarrow \bbP S_d, \qquad [\ell] \mapsto [\ell^d].
\]
\begin{definition}[Apolar sets of points]
  Given $f \in S_d$ and a set of points $A \subset \bbP S_1$, we say that $A$ is \emph{apolar} to $f$ if $[f] \in \langle \nu_d(A) \rangle$. If there is no subset $A' \subset A$ such that $[f] \in \langle \nu_d(A') \rangle$ we say that $A$ is \emph{non-redundant}. If the cardinality of $A$ equals the Waring rank of $f$ we say that $A$ \emph{computes (the rank of)} $f$.
\end{definition}
Any Waring decomposition of a polynomial $f \in S_d$ has a geometric counterpart given by a set of points $A \subset \bbP S_1$ apolar to $f$, i.e., the equation \eqref{eq:Waring_dec} is equivalent to say that $[f] \in \langle \nu_d(A) \rangle$ for $A = \{[\ell_1],\ldots,[\ell_r]\} \subset \bbP S_1$. This geometric interpretation allows us to define a notion of rank with respect to any algebraic variety $X \subset \bbP^N$. I.e., the \textit{rank} of any $p \in \bbP^N$ with respect to $X$ is the minimal cardinality of a set of points $A \subset X$ such that $p \in \langle A \rangle$. 

\medskip
A more algebraic way to relate Waring decompositions with sets of points is through \emph{apolarity theory}. By Macaulay \cite{macaulay1916algebraic} (see also \cite[Lemma 2.14]{iarrobino1999power}), there is a one-to-one correspondence between artinian Gorenstein graded algebras with socle degree $d$ and homogeneous polynomials of degree $d$. Let $R = \bigoplus_{d\in\bbZ} R_d = \bbC[y_0,\ldots,y_n]$ be a second polynomial ring and consider the \textit{apolar action} of $R$ over $S$ by partial derivatives, i.e., for any $g \in R_e, f \in S_d$, we define $g \circ f := g(\partial_0,\ldots,\partial_n)f \in S_{d-e}$. Given $f \in S_d$, the \emph{apolar ideal} of $f$ is the annihilator of $f$ with respect to the apolar action, i.e.,
\[
  \Ann(f) := \{g \in R ~:~ g \circ f = 0\} \subset R.
\]
Macaulay's duality affirms that an artinian algebra $A = R/J$ is Gorenstein of socle degree $d$ if and only if $J = \Ann(f)$ for some $f \in R_d$. The \emph{Apolarity Lemma}, see \cite[Lemma 1.15]{iarrobino1999power}, affirms that Waring decompositions of a polynomial $f \in S_d$ correspond to sets of points $A \subset \bbP S_1$ such that $I(A) \subset \Ann(f)$, i.e., the equation \eqref{eq:Waring_dec} is equivalent to say that $I(A) \subset \Ann(f)$ for $A = \{[\ell_1],\ldots,[\ell_r]\} \subset \bbP S_1$. In other words, the study of Waring decompositions is equivalent to study ideals of sets of points in $\bbP S_1$, i.e., radical homogeneous ideals of height one, contained in artinian Gorenstein ideals. 

\subsection{Hilbert function of apolar sets of points} 
Most of algebraic invariants of algebraic varieties are encoded by the \emph{Hilbert function}. Given a homogeneous ideal $I \subset R$, let $I_d = I \cap R_d$ be its degree-$d$ homogeneous part. The \emph{Hilbert function} of $R/I$ is the numerical function $\HF_{R/I} : \bbZ \rightarrow \bbZ$ with 
\begin{equation}\label{eq:HF}
  \HF_{R/I}(d) := \dim_{\bbC}[R_d/I_d] = \dim_\bbC R_d - \dim_\bbC I_d
\end{equation}
If $A \subset \bbP S_1$ is a set of points, then we write $\HF_A$ for the Hilbert function of $R/I(A)$ where $I(A)$ is the defining homogeneous ideal of $A$. 
The study of Hilbert functions of reduced sets of points has been successfully used to compute Waring ranks. For example, the Hilbert function of the apolar ideal of a given polynomial provides a lower bound for the Waring rank of the polynomial. In some cases, these were shown to be optimal, see e.g. \cite{carlini2012solution,carlini2018symmetric}.

If a polynomial admits a unique minimal Waring decomposition, then we say that the polynomial is \emph{identifiable}. It is worth to mention that the general homogeneous polynomial with rank $r$ strictly smaller than the general one is identifiable except for the special cases $(n,d,r) \in \{(2,6,9),(3,4,8),(5,3,9)\}$, see \cite{chiantini2017generic}. In \cite{angelini2018identifiability,angelini2020identifiability,angelini2022minimality}, \emph{Cayley-Bacharach properties} and \emph{liaison theory} of sets of points in the projective plane were exploited to provide a criterion to certify that a form is identifiable. On the other hand, when the polynomial is not identifiable, it is interesting to study the family of apolar sets of points computing the rank of the polynomial, see \cite{ranestad2000varieties}. In \cite{mourrain2020minimal}, a classification of low rank polynomials is given in terms of the possible Hilbert functions of apolar sets of points computing the rank of the polynomials.

It is well-known that the Hilbert function of a set of points is strictly increasing until it reaches its cardinality. We call \emph{regularity} of $A$ the smallest integer $i$ for which the Hilbert function of $A$ stabilizes, i.e., such that $\HF_A(i) = \HF_A(i+1)$.

\subsection{The question.}
In this short note, we address the following question. 
\begin{question}[{\cite[Question 2.11]{mourrain2020minimal}}]\label{question}
  Let $f \in S_d$. Do all sets of points computing the rank of $f$ share the same regularity? More generally, do they share the same Hilbert function?
\end{question}
The answer is known to be positive for binary forms ($n = 2$), because all sets of points with same cardinality on the projective line share the same Hilbert function, and for monomials $x_0^{d_0}\cdots x_n^{d_n}$ because all sets of points computing the rank are known to be complete intersections with generators of degrees $d_1+1,\ldots,d_n+1$ if $d_0 \leq d_i$ for all $i$, see \cite{buczynska2013waring}. In the present note, we prove that the answer to Question~\ref{question} is negative for ternary forms ($n=3$) if the degree is sufficiently large. We provide two cases. In Section \ref{ssec:example_1}, we consider degree-$10$ ternary forms having rank $22$, that is the general rank. Among them, we consider the special ones which admit a unique quintic in the apolar ideal. We show in Theorem \ref{thm:example_1} that the general such form has a decomposition lying on the quintic and one not contained in the quintic. Along with this proof, we notice that there exist degree-$10$ ternary forms of rank $22$ and with a unique quintic in the apolar ideal which have higher rank with respect to such quintic curve, see Remark \ref{rmk:remark_22}. We recall that the study of the connection between catalecticant matrices and the Hilbert function of certain decompositions was also used in \cite{chiantini2021footnote} where a complete stratification of ternary sextics of rank nine. In Section \ref{ssec:example_2}, we consider degree-$13$ ternary forms having rank $30$, that is strictly subgeneric. In Theorem \ref{thm:example_2}, we show that there exist two sets of $30$ points which compute the rank of a degree-$13$ ternary form and have different regularity. Our proofs follow the ideas of \cite{angelini2018identifiability,angelini2020identifiability,angelini2022minimality}, by exploiting properties of Hilbert functions of points in the projective plane and, in particular, \emph{liaison theory}. In Section \ref{sec:M2}, we show an explicit construction with the support of the algebra software Macaulay2 \cite{M2}.

\section{Algebraic properties of points in $\bbP^n$}

\subsection{Hilbert function of points in $\bbP^n$}
Let $A \subset \bbP^n$ be a set of points of cardinality $\ell(A)$. We recalled in \eqref{eq:HF} the definition of the \textit{Hilbert function} of $A$. The \textit{first difference} is the numerical function given by
\[
  Dh_A : \bbZ \rightarrow \bbZ, \quad D\HF_A(i) := \HF_A(i)-\HF_A(i-1).
\]
We recall some elementary properties of Hilbert functions of points in $\bbP^n$ that will be used later. We refer to \cite{chiantini2019hilbert} for an extensive exposition including proofs.
\begin{lemma}\cite[Lemma 2.16, Proposition 2.17, Proposition 2.18]{chiantini2019hilbert}\label{lemma:HF_properties}
   Let $A \subset \bbP^n$ be a set of points of cardinality $\ell(A)$.
  \begin{enumerate}
    \item $ h_{A}(j) \leq \ell(A)$, for all $j$;
    \item $ Dh_{A}(j) = 0$, for $ j < 0$;
    \item $ h_{A}(0) = Dh_{A}(0) = 1$;
    \item $ Dh_{A}(j) \geq 0 $, for all $ j $;
    \item $ h_{A}(i) = \sum_{j=0}^i Dh_{A}(j) $; 
    \item $ h_{A}(j) = \ell(A) $, for all $ j \gg 0$; 
    \item $ Dh_{A}(j) = 0$, for $ j \gg 0 $;
    \item $ \sum_{j=0}^\infty Dh_{A}(j) = \ell(A) $;
    \item for any $A' \subset A$, $h_{A'}(j) \leq h_{A}(j)$ and $Dh_{A'}(j) \leq Dh_{A}(j)$ for any $j$;
    \item if $Dh_A(j) < j$, then $Dh_A(j) \geq Dh_A(j+1)$.
    \end{enumerate}
\end{lemma}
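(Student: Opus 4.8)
The plan is to set up two standard tools and then read off each item, reserving the real work for the last property. The tools are: (i) the \emph{multi-evaluation map} $\mathrm{ev}_j \colon R_j \to \bbC^{\ell(A)}$ sending a form $F$ to its vector of values $(F(p_1),\dots,F(p_{\ell(A)}))$ at fixed affine representatives of the points of $A$, whose kernel is precisely $I(A)_j$, so that $h_A(j)=\dim_\bbC \operatorname{im}(\mathrm{ev}_j)$; and (ii) a \emph{general linear form} $L \in R_1$ chosen with $L(p_i)\neq 0$ for all $i$. Since $A$ is reduced, the associated primes of $R/I(A)$ are exactly the $\ell(A)$ point ideals, all of height $n$; a general $L$ lies in none of them, so it is a non-zerodivisor on $R/I(A)$ and $R/(I(A)+(L))$ is an Artinian quotient of a polynomial ring in $n$ variables. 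These two facts drive everything.

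Items (2), (3), (5) are then immediate. Since $R_j=0$ for $j<0$ we get $h_A(j)=0$ there, hence $Dh_A(j)=0$; since $R_0=\bbC$ contains no form vanishing on a nonempty set, $h_A(0)=1$ and $Dh_A(0)=h_A(0)-h_A(-1)=1$; and (5) is the telescoping identity $\sum_{j=0}^i Dh_A(j)=h_A(i)-h_A(-1)=h_A(i)$. From tool (i), $\dim \operatorname{im}(\mathrm{ev}_j)\le \ell(A)$ gives (1), while for $j\gg 0$ one separates points by a product of hyperplanes vanishing at all points but one to see that $\mathrm{ev}_j$ is surjective, giving (6). Then (7) follows from (6) once $h_A$ is constant, and (8) follows by combining (5) with (6).

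For the remaining items I would pass to the Artinian reduction. Because $L$ is a non-zerodivisor, multiplication by $L$ yields a short exact sequence $0\to (R/I(A))_{j-1}\xrightarrow{\,\cdot L\,}(R/I(A))_{j}\to (R/(I(A)+(L)))_{j}\to 0$, whence
\[
  Dh_A(j)=\dim_\bbC\big(R/(I(A)+(L))\big)_j \ge 0,
\]
which is (4). For (9), the inclusion $A'\subset A$ gives $I(A)\subseteq I(A')$, so $h_{A'}(j)\le h_A(j)$ directly from the definition of $h_A$; and choosing a \emph{single} $L$ avoiding all points of $A$ (hence of $A'$) makes $L$ a non-zerodivisor on both rings at once, so the displayed formula applied to $A$ and to $A'$, together with $I(A)+(L)\subseteq I(A')+(L)$, gives $Dh_{A'}(j)\le Dh_A(j)$.

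The one substantial step is (10), which I expect to be the main obstacle. The key observation is that, by the Artinian reduction above, $Dh_A$ is the Hilbert function of a standard graded Artinian algebra, so Macaulay's growth theorem applies: $Dh_A(j+1)\le Dh_A(j)^{\langle j\rangle}$, where $(-)^{\langle j\rangle}$ is the Macaulay operator on the $j$-th binomial representation. It then remains to verify the purely numerical fact that $a^{\langle j\rangle}=a$ whenever $a:=Dh_A(j)$ satisfies $a\le j$: the greedy $j$-th Macaulay representation of such an $a$ forces the leading coefficient to be $\binom{j}{j}=1$ and, inductively, $a=\binom{j}{j}+\binom{j-1}{j-1}+\dots+\binom{j-a+1}{j-a+1}$, a sum of $a$ ones, so raising each index by one produces again $a$ ones and $a^{\langle j\rangle}=a$. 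Hence $Dh_A(j+1)\le Dh_A(j)$ whenever $Dh_A(j)<j$, which is (10). All told, the only genuinely nontrivial ingredient is Macaulay's bound; everything else is bookkeeping with the evaluation map and the Artinian reduction.
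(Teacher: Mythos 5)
The paper does not actually prove this lemma; it is quoted verbatim from the cited reference with an explicit pointer to that source for proofs, so there is no in-paper argument to compare against. Your proof is correct and follows the standard route one would find in such a source: the evaluation map $\mathrm{ev}_j$ handles (1)--(3), (5)--(8); the Artinian reduction by a general linear form $L$ (a non-zerodivisor because $A$ is reduced, so the associated primes are the point ideals) identifies $Dh_A(j)$ with $\dim_\bbC\bigl(R/(I(A)+(L))\bigr)_j$ and yields (4) and (9); and Macaulay's growth bound gives (10). Your numerical verification that $a^{\langle j\rangle}=a$ for $a\le j$ is right: the greedy $j$-th representation of such an $a$ is $\binom{j}{j}+\binom{j-1}{j-1}+\cdots+\binom{j-a+1}{j-a+1}$, a sum of $a$ ones, which the Macaulay operator preserves, so in fact you prove (10) under the weaker hypothesis $Dh_A(j)\le j$. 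The only point worth tightening is the surjectivity of $\mathrm{ev}_j$ in (6): the separator of a point $p_i$ built as a product of linear forms has degree $\ell(A)-1$, so for larger $j$ one should multiply it by $L^{\,j-\ell(A)+1}$ with $L(p_i)\neq 0$; this is implicit in your sketch and harmless.
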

Finally, we denote
\[
  h^1_A(d) := \ell(A) - \HF_A(d) = \sum_{i=d+1}^\infty Dh_A(i). 
\] 
The following proposition allows us to understand if, given two sets of points, there exists a homogeneous polynomial of given degree to which they are both apolar.
\begin{proposition}\cite[Proposition 2.19]{angelini2020identifiability}\label{prop:dim_intersection}
  Let $A,B \subset \bbP^n$ be finite sets of points and let $Z = A \cup B$. Then, for any $d \in \bbN$,
  \[
    \dim \left(\langle \nu_d(A) \rangle \cap \langle \nu_d(B) \rangle\right) = \ell(A \cap B) - 1 + h^1_Z(d).
  \]
\end{proposition}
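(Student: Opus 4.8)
The plan is to reduce the statement to a computation of dimensions of linear subspaces of $S_d$, dualizing the Veronese spans through the apolar pairing. For a finite set $S\subset\bbP S_1$ write $W_S\subseteq S_d$ for the affine cone over $\langle\nu_d(S)\rangle$, i.e. the linear span of $\{\ell^d \;:\; [\ell]\in S\}$. The first thing I would record is that $\dim_\bbC W_S=\HF_S(d)$. Indeed, the apolar action gives a perfect pairing $R_d\times S_d\to\bbC$, and for $g\in R_d$ the element $g\circ\ell^d$ is a nonzero scalar multiple of the value $g([\ell])$; hence $g$ annihilates every generator $\ell^d$ of $W_S$ exactly when $g$ vanishes at every point of $S$, that is when $g\in I(S)_d$. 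Thus $W_S$ is the annihilator of $I(S)_d$ under the pairing, so
\[
  \dim_\bbC W_S=\dim_\bbC S_d-\dim_\bbC I(S)_d=\HF_S(d),
\]
and $\langle\nu_d(S)\rangle$ has projective dimension $\HF_S(d)-1$ (with the convention $\dim\emptyset=-1$).

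Next I would compute the join of the two spans. Since $\nu_d$ is a map of sets, $\nu_d(A)\cup\nu_d(B)=\nu_d(A\cup B)=\nu_d(Z)$, and the span of a union of points equals the sum of the spans; therefore $W_A+W_B=W_Z$. Applying the Grassmann formula to $W_A,W_B\subseteq S_d$ and using the previous step gives
\[
  \dim_\bbC(W_A\cap W_B)=\dim_\bbC W_A+\dim_\bbC W_B-\dim_\bbC(W_A+W_B)=\HF_A(d)+\HF_B(d)-\HF_Z(d),
\]
and passing to projective dimension yields
\[
  \dim\bigl(\langle\nu_d(A)\rangle\cap\langle\nu_d(B)\rangle\bigr)=\HF_A(d)+\HF_B(d)-\HF_Z(d)-1.
\]

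Finally I would translate this into the stated form. Writing $\HF_S(d)=\ell(S)-h^1_S(d)$ for $S\in\{A,B,Z\}$ and substituting the inclusion--exclusion identity $\ell(Z)=\ell(A)+\ell(B)-\ell(A\cap B)$, the expression above becomes
\[
  \ell(A\cap B)-1+h^1_Z(d)-h^1_A(d)-h^1_B(d).
\]
This is precisely the claimed quantity $\ell(A\cap B)-1+h^1_Z(d)$ once $h^1_A(d)=h^1_B(d)=0$, i.e. once $A$ and $B$ each impose independent conditions in degree $d$ (equivalently, $\nu_d$ sends each of $A$ and $B$ to a linearly independent set of points). I expect this last, purely numerical, step to be the only delicate point: the geometric core—the dictionary of the first paragraph together with the join identity $W_A+W_B=W_Z$—is immediate, whereas here one must keep track of where the $h^1_A(d)$ and $h^1_B(d)$ contributions sit and verify that the hypotheses place us in the regime where they vanish, which is exactly the setting of the non-redundant apolar sets of large-degree forms used in the later sections.
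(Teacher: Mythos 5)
Your argument is correct and is essentially the standard proof of this statement (the cited source \cite{angelini2020identifiability} argues the same way): identify the affine cone over $\langle\nu_d(S)\rangle$ with the annihilator of $I(S)_d$ under the apolar pairing, so that its dimension is $\HF_S(d)$, note $W_A+W_B=W_Z$, and apply the Grassmann formula. The one substantive point you raise is real and worth emphasizing: the identity you actually derive is
\[
\dim\bigl(\langle\nu_d(A)\rangle\cap\langle\nu_d(B)\rangle\bigr)=\ell(A\cap B)-1+h^1_Z(d)-h^1_A(d)-h^1_B(d),
\]
so the formula as displayed in the statement holds only under the additional hypothesis $h^1_A(d)=h^1_B(d)=0$, i.e.\ that $A$ and $B$ each impose independent conditions in degree $d$. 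Without it the statement is false (e.g.\ $A$ a set of $d+2$ collinear points in $\bbP^2$ and $B$ a single point off that line gives an empty intersection, while the displayed formula predicts dimension $0$). This hypothesis is present in the original source and is satisfied everywhere the proposition is used in this paper ($\HF_{Z_1}(d)=\HF_{Z_2}(d)=\ell(Z_i)$ in both examples), so your proof, with that hypothesis made explicit, is complete.
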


\subsection{Liaison for points in $\bbP^2$}
We describe here some construction that will be used in Section \ref{sec:examples} to construct the examples replying to Question \ref{question}. These are basic constructions from \emph{liaison theory}; we refer to \cite{peskine1974liaison,migliore1998introduction} for a complete presentation of the theory. 
\begin{definition}
  Let $A,B \subset \bbP^2$ be two sets of points. We say that \emph{$A$ and $B$ are linked} if there exists a complete intersection $Z$ such that $I_B = I_Z : I_A$. When $A \cap B = \emptyset$, this is equivalent to say that $Z = A \cup B$. We say that $B$ is the \emph{residue} of $A$ with respect to $Z$. 
\end{definition}
A set of points in $\bbP^2$ is arithmetically Cohen-Macaulay of codimension $2$, hence a minimal free resolution is provided by the \emph{Hilbert-Burch Theorem}, see \cite[Section~3A]{eisenbud2005geometry}. If $A$ and $B$ are linked, then a Hilbert-Burch matrix of $B$ can be recovered from one of $A$ by the \emph{mapping cone} construction, see \cite[Proposition 5.2.10]{migliore1998introduction}.

Let $S(i)$ be the polynomial ring with the gradation shifted by $i$, i.e., $S(d)_i := S_{d+i}$. A similar notation is used for every graded $S$-module. If $A$ and $B$ are linked through a complete intersection $Z$ of type $(d_1,d_2)$, then we have the following free resolutions
\[
  \begin{array}{c c c c c c c c c}
    0 & \rightarrow & S(-d_1-d_2) & \rightarrow & S(-d_1) \oplus S(-d_2) & \rightarrow & I_Z & \rightarrow & 0 \\
    & & \downarrow & & \downarrow\phi & & \downarrow \\
    0 & \longrightarrow & F_1 & \xrightarrow{M} & F_0 & \longrightarrow & I_A & \longrightarrow & 0 \\
  \end{array}
\]
where $\phi : S(-d_1) \oplus S(-d_2) \rightarrow F_0$ is the map induced by the right-most vertical arrow which corresponds to the inclusion $I_Z \subset I_A$ and $M$ is a Hilbert-Burch matrix of $A$. Then, by the mapping cone construction, a resolution of $I_B$ is given by
\[
  \begin{array}{c c c c c c c c c}
    0 & \rightarrow & F_0(-d_1-d_2)^\vee & \xrightarrow{(\phi | M)^t} & \begin{array}{c} S(-d_1) \oplus S(-d_2) \\ \oplus \\ F_1(-d_1-d_2)^\vee\end{array} & \rightarrow & I_B & \rightarrow & 0.
  \end{array}
\]
The Hilbert functions between the two linked sets of points are linked by the following formula on their first differences:
\begin{equation}\label{eq:sum_Dh}
  Dh_B(i) + Dh_A(d_1+d_2-i-2) = Dh_Z(i),
\end{equation}
see \cite[Theorem 3]{davis1985gorenstein} or \cite[Corollary 5.2.19]{migliore1998introduction}.

\subsection{Cayley-Bacharach properties}
We recall here also some basic fact about Cayley-Bacharach properties for sets of points in projective space. They are used to understand the possible Hilbert functions of sets of points $A \cup B$ where $A$ and $B$ are apolar to the same homogeneous form. These properties have been crucial to provide certificates for identifiability of forms in \cite{angelini2018identifiability,angelini2020identifiability,angelini2022minimality}.

\begin{definition}
  Let $Z \subset \bbP^n$ be a set of points. We say that it satisfies the \textit{Cayley-Bacharach property in degree $d$} ($CB(d)$) if, for all $p \in Z$, it holds that every form of degree $d$ vanishing at $Z \smallsetminus \{p\}$ also vanishes at $p$.
\end{definition}

\begin{proposition}\cite[Proposition 2.25]{angelini2020identifiability}\label{prop:CB}
  Let $f \in S_d$ and let $A$ be a non-redundant set of points apolar to $f$. Let $B$ a second non-redundant set of points apolar to $f$ such that $A \cap B = \emptyset$. Then, the set $Z = A \cup B$ satisfies $CB(d)$. 
\end{proposition}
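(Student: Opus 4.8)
The plan is to prove the statement directly: I fix an arbitrary point $p \in Z$ together with an arbitrary form $g \in R_d$ that vanishes on $Z \smallsetminus \{p\}$, and I aim to show that $g$ must also vanish at $p$, which is precisely the content of $CB(d)$. Since $Z = A \cup B$ and the two sets enter the hypotheses symmetrically, I may assume without loss of generality that $p \in A$. Then, because $A \cap B = \emptyset$, we have $Z \smallsetminus \{p\} = (A \smallsetminus \{p\}) \cup B$, so in particular $g$ vanishes on all of $B$ as well as on every point of $A$ other than $p$.

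The key tool is the apolarity dictionary. I would fix one apolar expression $f = \sum_{[\ell_i] \in A} \lambda_i \ell_i^d$, which exists because $A$ is apolar to $f$. The non-redundancy of $A$ forces every coefficient $\lambda_i$ to be nonzero: if some $\lambda_j$ were zero, then $[f] \in \langle \nu_d(A \smallsetminus \{[\ell_j]\}) \rangle$, contradicting the definition of non-redundancy. I would then recall the evaluation identity $g \circ \ell^d = d!\, g([\ell])$ for $g \in R_d$ and $\ell \in S_1$, which converts the apolar action into point evaluation (any nonzero constant in place of $d!$ would serve equally well, and $d! \neq 0$ in characteristic zero).

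Combining the two inputs finishes the argument. Since $g$ vanishes on $B$, we have $g \in I(B)_d$, and the Apolarity Lemma gives $I(B) \subseteq \Ann(f)$; hence $g \circ f = 0$. Expanding through the evaluation identity yields $0 = g \circ f = d! \sum_i \lambda_i\, g([\ell_i])$. Because $g$ also vanishes on $A \smallsetminus \{p\}$, every summand except the one indexed by $p$ vanishes, leaving $d!\, \lambda_p\, g(p) = 0$. As $\lambda_p \neq 0$, we conclude $g(p) = 0$, establishing $CB(d)$ at $p$.

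The step I expect to require the most care is the bookkeeping around non-redundancy: one must verify that the conclusion $\lambda_p \neq 0$ is genuinely forced for the specific point $p$ at which $g$ is assumed not to vanish, and check that the identical argument applies when $p \in B$ after interchanging the roles of $A$ and $B$. Beyond this, the proof is a direct consequence of the Apolarity Lemma and the evaluation identity, so no delicate estimate on Hilbert functions or on the geometry of $\nu_d(Z)$ is needed.
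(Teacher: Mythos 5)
Your proof is correct, and it is essentially the standard argument behind \cite[Proposition 2.25]{angelini2020identifiability} (which this paper only cites, without reproducing a proof): one uses $g\in I(B)_d\subseteq\Ann(f)$ to get $g\circ f=0$, expands $f$ along the decomposition supported on $A$, and invokes non-redundancy to force all coefficients, in particular $\lambda_p$, to be nonzero. The only cosmetic difference is that the source phrases the final step dually, via the hyperplane $H_g\subset\bbP S_d$ containing $[f]$ and $\nu_d(A\smallsetminus\{p\})$ but not $\nu_d(p)$, whereas you make the same point explicitly through the evaluation identity $g\circ\ell^d=d!\,g([\ell])$.
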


\begin{theorem}\cite[Theorem 4.9]{angelini2018identifiability}\label{thm:CB_implies}
  Let $Z \subset \bbP^n$ be a set of points satisfying $CB(d)$. Then, for any $i \in \{0,\ldots,d+1\}$,
  \[
    \sum_{j=0}^i Dh_Z(j) \leq \sum_{j=0}^i Dh_Z(d+1-j). 
  \]
\end{theorem}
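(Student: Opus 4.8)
The plan is to first rewrite the inequality in a more transparent form, then pass to the Artinian reduction, where the Cayley--Bacharach hypothesis turns into a vanishing condition on the socle. By property (5) of Lemma~\ref{lemma:HF_properties} the left-hand side equals $h_Z(i)$, while telescoping the right-hand side (with $k=d+1-j$) gives $\sum_{j=0}^i Dh_Z(d+1-j)=h_Z(d+1)-h_Z(d-i)$. Hence the assertion is equivalent to
\[
  h_Z(i)+h_Z(d-i)\le h_Z(d+1),\qquad 0\le i\le d+1,
\]
with the convention $h_Z(-1)=0$, so that the case $i=d+1$ is trivial. I would keep in mind one guiding sanity check: when $Z$ is arithmetically Gorenstein the first difference $Dh_Z$ is symmetric about $(d+1)/2$, both sides agree, and the inequality becomes the equality \eqref{eq:sum_Dh} of a complete intersection; the Cayley--Bacharach case should yield the corresponding one-sided version.

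The main reduction is to the \emph{Artinian reduction}. Since $Z$ is reduced, $R/I(Z)$ is a one-dimensional Cohen--Macaulay ring, so a general linear form $\ell_0$ is a nonzerodivisor and $\bar A:=R/(I(Z)+(\ell_0))$ is Artinian with $h_{\bar A}(j)=Dh_Z(j)$ for all $j$. I would translate $CB(d)$ into socle language via the following model: identifying $(R/I(Z))_j$ with a subspace $V_j\subseteq\bbC^{Z}$ of functions through evaluation (trivialised by $\ell_0^j$), one has $V_0\subseteq V_1\subseteq\cdots$, $\dim V_j=h_Z(j)$, $\bar A_j\cong V_j/V_{j-1}$, and $V_a\cdot V_b\subseteq V_{a+b}$ under pointwise multiplication. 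A degree-$s$ separator of a point $p$ is exactly a function realising $e_p\in V_s$, where $e_p$ is the indicator of $p$; thus $CB(d)$ says $e_p\notin V_d$ for every $p$. A short computation (spreading a socle element over its support by varying the linear form) shows that any nonzero socle class of $\bar A$ in degree $j$ forces $e_p\in V_j$ for each $p$ in its support, while conversely each $p$ with least separator degree $j$ yields a nonzero class in $\operatorname{soc}(\bar A)_j$. Consequently $Z$ satisfies $CB(d)$ if and only if $\operatorname{soc}(\bar A)_j=0$ for all $j\le d$.

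Finally I would derive the majorization from this socle vanishing by duality. Passing to the Matlis dual $N=\bar A^{\vee}$ (equivalently, the canonical module of $R/I(Z)$ up to a twist), the condition $\operatorname{soc}(\bar A)_{\le d}=0$ is equivalent to $N$ being generated in degrees $\le -(d+1)$, and $h_N(-j)=h_{\bar A}(j)$. The target inequality then says that the graded pieces of $N$ near its generators dominate, in cumulative dimension, the graded pieces near its top; this is precisely the kind of duality statement of Davis--Geramita--Orecchia \cite{davis1985gorenstein}, which I would invoke here.

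I expect this last step to be the crux. Unlike the Gorenstein case the inequality is \emph{not} termwise: one can already see $h_{\bar A}(j)>h_{\bar A}(d+1-j)$ for some $j$ (e.g.\ for general points with $h_{\bar A}=(1,2,2)$ and $d=1$), so the argument cannot rest on the injectivity of a single multiplication map and must compare the entire dual module with its generators. Concretely, the hard point is to show that multiplication by a general linear form attains maximal rank in the relevant degrees --- a Weak-Lefschetz-type property that is guaranteed here by the absence of separators of degree $\le d$ --- and to assemble these rank bounds into the cumulative estimate $h_Z(i)+h_Z(d-i)\le h_Z(d+1)$.
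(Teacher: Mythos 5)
The paper does not prove this statement; it is quoted from \cite{angelini2018identifiability}, so your proposal has to be judged against the argument there and on its own terms. Your opening reductions are correct and match the known strategy: the claim is equivalent to $h_Z(i)+h_Z(d-i)\le h_Z(d+1)$ for $0\le i\le d+1$ (with $h_Z(-1)=0$), and translating $CB(d)$ into the absence of separators of degree $\le d$ --- equivalently, following Geramita--Kreuzer--Robbiano, into a bound on the generation degrees of the canonical module, dually a vanishing condition on low-degree socle --- is the right first move.

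The difficulty is that you stop exactly where the content of the theorem lies. Your final paragraph concedes that deriving the cumulative inequality from the socle/generation condition is ``the crux'' and ``the hard point'', and what you offer in its place --- that multiplication by a general linear form attains maximal rank in the relevant degrees, ``a Weak-Lefschetz-type property that is guaranteed here by the absence of separators'' --- is an unproved assertion rather than a citable fact. Lefschetz-type properties for Artinian reductions of points are delicate and fail in general; no standard result derives them from the absence of low-degree separators; and it is not explained how maximal rank of individual multiplication maps would assemble into the cumulative (rather than termwise) estimate, which you yourself note is the only version that can hold. There is also a smaller omission: already the case $i=0$ requires an argument, namely $Dh_Z(d+1)\ge 1$, i.e.\ $h_Z(d)<\ell(Z)$, which does follow from $CB(d)$ but is nowhere recorded. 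Finally, reducing everything to the socle of the Artinian quotient risks discarding the information you need: the proof in the cited source (in the tradition of Davis--Geramita--Orecchia and Geramita--Kreuzer--Robbiano) extracts the inequality from the canonical module as a rank-one torsion-free module over the one-dimensional \emph{reduced} ring $R/I(Z)$, where general linear forms and general sections are honest nonzerodivisors --- leverage that is no longer available once you have passed to the Artinian level. As written, the proposal is a correct reformulation followed by an appeal to a lemma that is neither stated precisely nor proved; the decisive step is missing.
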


\subsection{Secant varieties and Terraccini loci}\label{sec:secant}
The geometric framework to deal with Waring decompositions is the one of \textit{secant varieties} of Veronese varieties, see \cite{carlini2014four,bernardi2018hitchhiker}. Given an irreducible algebraic variety $X \subset \bbP^N$, the \textit{$r$-th secant variety} is an irreducible variety defined as the Zariski closure of the union of linear spaces spanned by $r$-tuples of points on $X$, i.e.,
\[
  \sigma_r(X) := \overline{\bigcup_{p_1,\ldots,p_r \in X} \langle p_1,\ldots,p_r \rangle}\subset \bbP^N.
\]
If $X$ is the $d$-th Veronese embedding of $\bbP S_1$, then, by Definition \ref{def:Waring},
\[
  \sigma_r(\nu_d(\bbP^n)) = \overline{\{[f] \in \bbP S_d ~:~ \rk(f) \leq r\}}\subset\bbP S_d.
\]
Secant varieties are clearly nested into each other by definition, but, if the variety $X$ is non-degenerate, then we have that its secant varieties eventually fill the ambient space, see \cite[Exercise 2.6]{carlini2014four}. For example, the minimal $r$ such that $\sigma_r(\nu_d(\bbP^n)) = \bbP S_d$ is the Waring rank of the general degree-$d$ form in $n+1$ variables. 

Understanding which secant variety fills the ambient space reduces to the computation of dimensions of secant varieties. By a simple parameter count, the \textit{expected dimension} of the $r$-th secant variety of an algebraic variety $X \subset \bbP^N$ is
\[
  {\rm exp}.\dim\sigma_r(X) := \min\{r\dim(X) + r - 1, N\}.
\]
The expected dimension is always an upper bound for the actual dimension and if $\dim \sigma_r(X) < {\rm exp}.\dim\sigma_r(X)$ then we say that $X$ is \textit{$r$-defective}. The classification of defective varieties is a very classical problem with roots in the late XIX century. By Palatini \cite{palatini1909sulle}, if $\sigma_r(X)$ is not an hypersurface and does not fill the ambient space, then it is at least of codimension $2$ inside $\sigma_{r+1}(X)$. As an immediate corollary, projective curves are never defectives. A classification of defective surfaces is due to Severi and Terracini, see \cite[Theorem 14 and Theorem 15]{chiantini2004lectures}, while the classification of defective threefolds is due to Scorza \cite{scorza1908determinazione}. The case of fourfolds has been treated first by Scorza in \cite{scorza1909} (see also \cite{scorza1960}) and then completed in the recent preprint \cite{chiantini2020secant}.

If we denote by $X^{(r)}$ the $r$-th symmetric product of the variety $X$, the \textit{$r$-th abstract secant variety} of $X$ is 
\[
  ab\sigma_r(X) := \overline{\{((p_1,\ldots,p_r),q) ~:~ q \in \langle p_1,\ldots,p_r \rangle\}} \subset X^{(r)} \times \bbP^N.
\]
Throughout the paper, we always denote by $\pi_i$ the projection onto the $i$-th factor from a cartesian product. The $r$-th abstract secant variety is irreducible of dimension $r\dim(X)+r-1$. Hence, if $\sigma_r(X) \subsetneq \bbP^N$, non-defectiveness is equivalent to say that the projection $\pi_2 : ab\sigma_r(X) \rightarrow \bbP^N$ has generically finite fibers. The \textit{$r$-th Terracini locus} of $X$ is
\[
  \mathbb{T}_r(X) := \overline{\left\{(p_1,\ldots,p_r) ~:~ \substack{p_i \in X^{\rm smooth}, \{p_1,\ldots,p_r\} \text{ are linearly independent}, \\ T_{p_1}(X),\ldots,T_{p_r}(X) \text{ are linearly dependent}}\right\}} \subset X^{(r)}.
\]
The main tool to study dimensions of secant varieties is the \textit{Terracini's Lemma} which describes the general tangent space to secant varieties, see \cite[Lemma 1]{bernardi2018hitchhiker}. In particular, it claims that, given general points $p_1,\ldots,p_r \in X$ and a general point $p \in \langle p_1,\ldots,p_r \rangle$, then
\[
  T_p\sigma_r(X) = \langle T_{p_1}(X),\ldots,T_{p_r}(X) \rangle. 
\]
If $X$ is not $r$-defective and $\sigma_r(X)$ does not fill the space, then $\mathbb{T}_r(X)$ is a proper subvariety of $X^{(r)}$. In other words, the preimage of the Terracini's locus $\pi_1^{-1}(\mathbb{T}_r(X))$ is contained in the locus where the differential of the projection $\pi_2 : ab\sigma_r(X) \rightarrow \bbP^N$ drops rank. 

The complete classification of defective Veronese varieties is known as the \textit{Alexander-Hirschowitz Theorem} \cite{alexander1995polynomial}. In the 1980s, Alexander and Hirschowitz defined a very powerful machinery to deal with problems on polynomial interpolation such as the computation of the dimension of the linear system of degree-$d$ hypersurfaces of $\bbP^n$ having $r$ singularities at general points. The latter problem is equivalent to compute dimensions of secant varieties of the Veronese varieties and, a posteriori, to compute the general Waring rank in $S_d$.
\begin{theorem}[{Alexander-Hirschowitz Theorem, \cite{alexander1995polynomial}}]\label{thm:ah}
  The $r$-th secant variety of the Veronese variety $\nu_d(\bbP^n)$ is non-defective except for 
  \begin{itemize}
    \item $d = 2$, $n \geq 2$ and $2 \leq r \leq n$;
    \item $d = 3, n = 4, r = 7$;
    \item $d = 4, (n,r) \in \{(2,5), (3,9), (4,14)\}$.
  \end{itemize}
\end{theorem}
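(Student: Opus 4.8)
The plan is to prove the theorem in its equivalent interpolation form, via \emph{Terracini's Lemma}, and then to attack that interpolation problem by an inductive degeneration argument (the \emph{Horace method} and its differential refinement due to Alexander and Hirschowitz). First I would reduce the statement about secant dimensions to a statement about \emph{double points}. By Terracini's Lemma, for general $\ell_1,\ldots,\ell_r \in S_1$ the affine tangent space to the cone over $\sigma_r(\nu_d(\bbP^n))$ at a general point of $\langle \nu_d(\ell_1),\ldots,\nu_d(\ell_r)\rangle$ equals $\sum_{i=1}^r \ell_i^{d-1} S_1$. By the apolar pairing, this subspace of $S_d$ is the annihilator of $(I_{2Z})_d$, where $Z = \{[\ell_1],\ldots,[\ell_r]\}$ and $2Z$ is the scheme of double points supported on $Z$. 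Hence
\[
  \dim \sigma_r(\nu_d(\bbP^n)) + 1 = \dim_\bbC (S/I_{2Z})_d,
\]
and non-defectivity is equivalent to the assertion that $r$ general double points impose independent conditions on forms of degree $d$, i.e.\ that $h^0(\mathcal{I}_{2Z}(d)) = \max\{0,\,\binom{n+d}{n} - r(n+1)\}$. The entire theorem thus becomes a single postulation statement for general double points, to be established outside the listed cells.

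First I would fix a hyperplane $H \cong \bbP^{n-1}$ and exploit the Castelnuovo restriction sequence
\[
  0 \to \mathcal{I}_{\mathrm{Res}_H X}(d-1) \to \mathcal{I}_X(d) \to \mathcal{I}_{\mathrm{Tr}_H X,\,H}(d) \to 0,
\]
where $\mathrm{Tr}_H X = X \cap H$ is the trace and $\mathrm{Res}_H X$ is cut out by $(I_X : I_H)$. Specializing $k$ of the double points onto $H$, a local computation shows that a double point lying on $H$ contributes a double point of $H \cong \bbP^{n-1}$ to the trace and only a \emph{simple} point to the residue, while each general double point off $H$ contributes nothing to the trace and a full double point (in degree $d-1$) to the residue. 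Choosing $k$ to balance the two sides, simultaneous vanishing of the trace cohomology (an interpolation problem of degree $d$ on $\bbP^{n-1}$) and of the residue cohomology (one of degree $d-1$ on $\bbP^n$) forces the middle term to have the expected dimension. This sets up a double induction on $n$ and on $d$.

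The naive specialization above almost never balances the numerics exactly, since a collapsed double point surrenders only a simple point to the residue, and in the numerically critical range $r(n+1)\approx \binom{n+d}{n}$ one loses control. The decisive refinement is the \emph{differential Horace method}: rather than collapsing whole double points onto $H$, one degenerates them in a flat family that, in the limit, splits the infinitesimal (tangent) datum of each double point partly into the trace and partly into the residue, so that each of the two lower-dimensional problems receives roughly half of the tangency conditions. One then invokes the Horace differential lemma to deduce maximal rank of $\mathcal{I}_X(d)$ from the two reduced statements. Implementing this step requires establishing flatness of the degenerating family and a careful identification of the limit schemes, and it imposes stringent parity and divisibility constraints on how many points may be specialized.

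Finally I would settle the base of the induction and isolate the exceptions, which are precisely the cells where a special hypersurface forces an unexpected section. For $d=2$ with $2 \le r \le n$, any quadric singular at $r$ independent points is singular along their entire linear span, producing the classical determinantal defect. For the three quartic cases one checks the uniform pattern $r = \binom{n+2}{2}-1$: the $r$ general points lie on a unique quadric $Q$, and $Q^2$ is a quartic singular at all of them, giving defect one for $(d,n,r) = (4,2,5),(4,3,9),(4,4,14)$; the remaining cell $(3,4,7)$ requires a separate, genuinely special configuration. Each exception must be verified directly, and one must confirm that the induction never lands in these cells except where predicted. The main obstacle is exactly this closure of the induction: the differential Horace step's parity constraints make the numerically tight cases delicate, so they demand a case-by-case choice of specialization, and a finite but substantial list of low-degree base cases must be checked by hand or by computer.
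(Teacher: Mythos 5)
The paper does not prove this statement: Theorem~\ref{thm:ah} is quoted verbatim from the literature (\cite{alexander1995polynomial}) and used as a black box, so there is no internal proof to compare yours against. Your outline is, however, an accurate description of the known proof strategy. The reduction via Terracini's Lemma and apolarity to the postulation of $r$ general double points is correct, as is the identification $\dim\sigma_r(\nu_d(\bbP^n))+1=\dim_\bbC(S/I_{2Z})_d$; the Castelnuovo trace/residue sequence with the observation that a double point specialized to $H$ leaves only a simple point in the residue is the classical Horace method, and you correctly identify the differential Horace lemma as the refinement needed in the numerically critical range. Your analysis of the exceptions is also right: the $d=2$ defects come from quadric cones being singular along linear spans, the three quartic cases all have $r=\binom{n+2}{2}-1$ so that the square of the unique quadric through the points gives the unexpected section, and $(d,n,r)=(3,4,7)$ is explained by the secant variety of the rational normal quartic curve through the seven points.

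That said, what you have written is a plan rather than a proof, and the deferred parts are where essentially all of the difficulty of Alexander and Hirschowitz's work lives. Specifically: (i) the differential Horace lemma itself (flatness of the degenerating family and identification of the limit trace and residue schemes) is a substantial technical result, not a routine verification; (ii) closing the double induction requires showing that the specialization can always be arranged to avoid the exceptional cells and to satisfy the parity and divisibility constraints, which forces a genuinely intricate case analysis; and (iii) the list of base cases is large and each must be settled by an ad hoc argument or computation. None of these steps is wrong in your sketch, but none is carried out, so as it stands the proposal establishes the reduction and the exceptional list but not the theorem. For the purposes of the present paper this is moot, since the result is cited rather than proved.
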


\section{The examples}\label{sec:examples}
In this section, we give a negative answer to Question \ref{question} by showing that there exist ternary forms admitting apolar sets of points computing their Waring rank with different Hilbert function as well as different regularity.

\medskip
The idea is to start from a special set of points $Z_1$ and, by means of liaison theory, to construct a second set of points of the same cardinality $Z_2$, but different Hilbert function, such that $\langle \nu_d(Z_1) \rangle \cap \langle \nu_d(Z_2) \rangle$ is non-empty, i.e., there is a degree-$d$ form $f$ whose $Z_1$ and $Z_2$ are both apolar to. After the construction, we prove that indeed $Z_1$ and $Z_2$ compute the rank of $f$. 

\subsection{First example. Degree-$10$ ternary forms of rank $22$.}\label{ssec:example_1}
\begin{theorem}\label{thm:example_1}
  There exists a form $f \in S_{10}$ of rank $22$ having two sets of points computing its rank and presenting different Hilbert functions. 
\end{theorem}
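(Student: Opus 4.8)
The plan is to realize the two decompositions by a liaison construction inside a fixed complete intersection, so that both sets of points are forced to lie on a common degree-$10$ form while having genuinely different Hilbert functions. Concretely, I would choose a complete intersection $Z \subset \bbP^2$ of two curves whose type $(d_1,d_2)$ is calibrated so that linked sets of points have cardinality $22$ on each side. Inside $Z$, pick a set $Z_1$ of $22$ points that lie on a single quintic (so its Hilbert function "sees" the quintic, giving $h^1_{Z_1}(10)$-behaviour reflecting the extra linear syzygy), and let $Z_2$ be built so as \emph{not} to lie on any quintic, forcing a different first-difference vector $Dh_{Z_2}$. The formula \eqref{eq:sum_Dh}, $Dh_B(i)+Dh_A(d_1+d_2-i-2)=Dh_Z(i)$, is the engine here: it lets me read off $Dh_{Z_2}$ from $Dh_{Z_1}$ and the complete-intersection Hilbert function, and I would select $(d_1,d_2)$ precisely so that the two difference vectors differ (in particular forcing different regularities).

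Next I would produce the form $f$ common to both. The key device is Proposition \ref{prop:dim_intersection}: if I arrange $Z_1 \cap Z_2$ suitably (ideally disjoint, so $\ell(Z_1\cap Z_2)=0$) and ensure $h^1_{Z_1\cup Z_2}(10) \geq 1$, then $\langle \nu_{10}(Z_1)\rangle \cap \langle \nu_{10}(Z_2)\rangle$ is nonempty and I can choose $f$ to be (the class of) a general point in that intersection. By the Apolarity Lemma this exhibits both $Z_1$ and $Z_2$ as apolar to $f$. So the bulk of the setup reduces to controlling the Hilbert function of the union $W = Z_1 \cup Z_2$, which again is governed by liaison and by the Cayley--Bacharach constraints of Theorem \ref{thm:CB_implies} and Proposition \ref{prop:CB}.

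The genuinely hard part is the \emph{minimality}: I must show that $\rk(f)=22$ and that each $Z_i$ actually \emph{computes} the rank, i.e. no decomposition of length $<22$ exists. For the upper bound $\rk(f)\le 22$ I just use either apolar set. For the lower bound I would argue as in \cite{angelini2018identifiability,angelini2020identifiability,angelini2022minimality}: if some set $A$ of $\le 21$ points were apolar to $f$, then $A$ together with one of the $Z_i$ would have to satisfy $CB(10)$ by Proposition \ref{prop:CB}, and the Hilbert-function inequality of Theorem \ref{thm:CB_implies} applied to $Z_i \cup A$ would be violated given how I chose the degrees; here the fact that $22$ is the \emph{general} rank for $S_{10}$ (from Theorem \ref{thm:ah}, since $\nu_{10}(\bbP^2)$ is non-defective and $\binom{12}{2}/3 = 22$) gives the baseline, and the apolar-quintic condition is what pins the two competing structures together. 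I would also need the existence of such configurations to hold for a \emph{general} $f$ in the relevant family, which I expect to verify by a dimension count on the incidence variety of (quintic, $22$ points on it, form $f$) and, if a clean theoretical count is delicate, by an explicit Macaulay2 computation as announced in Section \ref{sec:M2}.

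The step I expect to be the main obstacle is precisely this lower-bound/minimality argument: controlling \emph{all} hypothetical shorter decompositions simultaneously through the Cayley--Bacharach inequalities, rather than just the two I constructed. The liaison and Hilbert-function bookkeeping is routine once the degrees $(d_1,d_2)$ are fixed, but ruling out a sporadic $\le 21$-point apolar set requires knowing the Hilbert function of $f$ tightly enough (via its catalecticant/apolar ideal, e.g. the presence of a unique quintic in $\Ann(f)$) and then pushing Theorem \ref{thm:CB_implies} to a contradiction; getting the inequality to bite for \emph{every} competitor is the delicate point.
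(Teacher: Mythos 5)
Your overall architecture (liaison to build $Z_1,Z_2$, Proposition \ref{prop:dim_intersection} to produce a common apolar $f$, and the recognition that minimality is the hard part) matches the paper's, but the step you flag as delicate is exactly where your plan fails, and the paper resolves it by a completely different mechanism. The Cayley--Bacharach route does not close for this theorem. Take $Z_1$ to be $22$ points on a quintic, so $Dh_{Z_1}=(1,2,3,4,5,5,2)$, and suppose $A$ is an apolar set with $\ell(A)\leq 21$, disjoint from $Z_1$. Applying Theorem \ref{thm:CB_implies} to $Z=Z_1\cup A$ with $i=5$ gives $\sum_{j=0}^{5}Dh_Z(j)\leq\sum_{j=6}^{11}Dh_Z(j)$, and since $Dh_Z(j)\geq Dh_{Z_1}(j)$ the left side is at least $20$; this yields only $\ell(Z)\geq 40$, i.e.\ $\ell(A)\geq 18$. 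There is no slack to push this to $22$: the rank here \emph{is} the generic rank, so the CB inequalities (which work in the paper's second, strictly subgeneric example of degree $13$ and rank $30$) cannot isolate $f$ from shorter decompositions. The paper instead proves the lower bound by a codimension comparison: the locus $\mathcal{X}$ of degree-$10$ forms with a $22$-point apolar set on a quintic has codimension $2$ in $\bbP S_{10}$ (Claim \ref{claim:deg10_b}, via the irreducibility of the catalecticant hypersurface and Riemann--Roch on the quintic), while $\sigma_{21}(\nu_{10}(\bbP^2))$ has codimension $3$ by Theorem \ref{thm:ah}; hence the general member of $\mathcal{X}$ has rank exactly $22$. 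Your passing mention of a dimension count on an incidence variety is gestured at only as a fallback for existence, not identified as the engine of the rank bound, and you omit the substantial density argument (dominance of both projections from $\mathcal{J}$, the harder one, Claim \ref{claim:deg10_c}, requiring the nodal-cubic construction) needed to transfer ``general in $\mathcal{X}$'' to the forms you actually build.

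A secondary problem is your liaison setup. Linking $Z_1$ directly to $Z_2$ forces a complete intersection of degree $44$, of type $(1,44)$, $(2,22)$ or $(4,11)$; in each case $Dh_{Z_1}(j)\leq Dh_Z(j)\leq\min(d_1,d_2)\leq 4$, which is incompatible with $Dh_{Z_1}(4)=Dh_{Z_1}(5)=5$ for $22$ general points on a quintic. So no complete intersection of two curves contains both sets in the configuration you want. The paper circumvents this by adjoining an auxiliary set $Y$ of five collinear points and linking $A=Z_1\cup Y$ inside a general $(7,7)$ complete intersection, so that $Z_1\cup Z_2$ is linked to $Y$ rather than $Z_1$ to $Z_2$; the five collinear points are what make $h^1_{Z_1\cup Z_2}(10)=1$ come out exactly right. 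This is a fixable but genuine defect in the construction as you state it.
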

\begin{proof}
  Let $Z_1 \subset \bbP^2$ be a set of $22$ general points on a quintic curve. Then,
  \begin{equation}\label{eq:deg10_Z1}
    \begin{matrix}
      h_{Z_1} : & 1 & 3 & 6 & 10 & 15 & 20 & 22 & 22 & \cdots \\
      D h_{Z_1} : & 1 & 2 & 3 & 4 & 5 & 5 & 2 & - 
    \end{matrix}
  \end{equation}
  Let $L \subset \bbP^2$ be a general line and let $Y$ be a set of five general points on $L$. Consider $A = Z_1 \cup Y$. Clearly there are no quintics through $A$ since there is a unique quintic through $Z_1$ and the line $L$ is chosen generically. Moreover, $Y$ imposes independent conditions on the space of sextics through $Z_1$: indeed, it is enough to observe that $\dim I(Z_1)_6 - \dim I(Z_1)_5 = 5 \geq \#Y$, see e.g. \cite[Lemma 1.9-(i)]{catalisano2007segre}. Hence,
  \begin{equation}\label{eq:deg10_A}
    \begin{matrix}
      h_{A} : & 1 & 3 & 6 & 10 & 15 & 21 & 27 & 27 & \cdots \\
      D h_{A} : & 1 & 2 & 3 & 4 & 5 & 6 & 6 & - 
    \end{matrix}
  \end{equation}
  Let $X$ be a complete intersection of two general septics containing $A$. Then, we define $Z_2$ to be the residue of $A$ through $X$. Hence, by \eqref{eq:sum_Dh}, we have
  \begin{equation}\label{eq:deg10_Z2}
    \begin{array}{*{21}c}
      &  & $\tiny{0}$ & $\tiny{1}$ & $\tiny{2}$ & $\tiny{3}$ & $\tiny{4}$ & $\tiny{5}$ & $\tiny{6}$ & $\tiny{7}$ & $\tiny{8}$ & $\tiny{9}$ & $\tiny{10}$ & $\tiny{11}$ & $\tiny{12}$ \\
      Dh_{X}(i) & : & 1 & 2 & 3 & 4 & 5 & 6 & 7 & 6 & 5 & 4 & 3 & 2 & 1 \\
      Dh_{Z_2}(i) & : & 1 & 2 & 3 & 4 & 5 & 6 & 1 & - & - & - \\
      Dh_{A}(12-i) & : & & & & & & & 6 & 6 & 5 & 4 & 3 & 2 & 1 & 
    \end{array}
  \end{equation}
  Note that, since we are considering a linkage through a complete intersection of two curves both of degree at least the regularity of $Z_1$ plus one, which is seven, then, by the Peskine-Szpiro Theorem, see \cite[Theorem 6.4.4 and Example~6.4.5]{migliore1998introduction}, $Z_2$ is smooth and $Z_1 \cap Z_2 = \emptyset$. Hence, $Z_2$ is a set of $22$ distinct points disjoint from $Z_1$ and with Hilbert function different than $Z_1$. 
  \begin{claim}\label{claim:deg10_a}
    $Z_1$ and $Z_2$ are apolar to a common degree-$10$ ternary form.
  \end{claim}
  \begin{proof}
    By Proposition \ref{prop:dim_intersection}, it is enough to compute the Hilbert function of the union $Z_1 \cup Z_2$. Note that, by construction, $Z_1 \cup Z_2$ is linked to the set $Y$ of five collinear points. Hence, by \eqref{eq:sum_Dh}
    \[
      \begin{array}{*{21}c}
        &  & $\tiny{0}$ & $\tiny{1}$ & $\tiny{2}$ & $\tiny{3}$ & $\tiny{4}$ & $\tiny{5}$ & $\tiny{6}$ & $\tiny{7}$ & $\tiny{8}$ & $\tiny{9}$ & $\tiny{10}$ & $\tiny{11}$ & $\tiny{12}$ \\
        Dh_{X}(i) & : & 1 & 2 & 3 & 4 & 5 & 6 & 7 & 6 & 5 & 4 & 3 & 2 & 1 \\
        Dh_{Z_1 \cup Z_2}(i) & : & 1 & 2 & 3 & 4 & 5 & 6 & 7 & 6 & 4 & 3 & 2 & 1 \\
        Dh_{Y}(12-i) & : & & & & & & & & & 1 & 1 & 1 & 1 & 1 & 
      \end{array}
    \]
    Hence, by Proposition \ref{prop:dim_intersection}, $\dim \langle \nu_{10}(Z_1) \rangle \cap \langle \nu_{10}(Z_2) \rangle = 0$. I.e., there exists $f \in S_{10}$ such that $[f] \in \langle \nu_{10}(Z_1) \rangle \cap \langle \nu_{10}(Z_2) \rangle$.
  \end{proof}
  Now, we need to show that the general degree-$10$ form $f$ constructed as in Claim~\ref{claim:deg10_a} is indeed of rank $22$. Let $\mathcal{X}$ be the variety of degree-$10$ forms admitting an apolar set of $22$ reduced points lying on a quintic. 
  \begin{claim}\label{claim:deg10_b}
    The variety $\mathcal{X}$ is irreducible and has codimension $2$. 
  \end{claim}
  \begin{proof}
    Note that the variety of degree-$10$ ternary forms $f\in S_{10}$ admitting a quintic in the apolar ideal is a hypersurface defined by the vanishing of the determinant of the central \textit{catalecticant matrix} associated to the map 
    \[{\rm cat}_5(f) : R_5 \rightarrow S_5, g \mapsto g \circ f.\]
    The fact that the variety $\{\det({\rm cat}_5(f)) = 0\}$ is irreducible follows from \cite{eisenbud1988linear}. Fixed a plane quintic $C \subset \bbP^2$, by Riemann-Roch Theorem, we have that $\langle \nu_{10}(C) \rangle = \bbP^{44}$. Since curves are non-defective, $\dim \sigma_{22}\nu_{10}(C) = 43$, see Section \ref{sec:secant}. Therefore the general form $f \in \langle \nu_{10}(C) \rangle$ has rank $23$ with respect to $\nu_{10}(X)$ and $\mathcal{X}$ has codimension one in the hypersurface of forms admitting a quintic in the apolar ideal. In other words, if we consider the incidence variety of forms lying on the $22$-nd secant variety of the degree-$10$ Veronese embedding of a plane quintic \[\mathcal{I} = \{(C,[f]) ~:~ [f] \in \sigma_{22}(\nu_{10}(C))\} \subset \bbP S_5 \times \bbP S_{10},\] then $\mathcal{X} = \pi_2(\mathcal{I})$. The projection $\pi_2$ is generically one-to-one because the quintic in the apolar ideal for a general form $[f] \in \mathcal{X}$ is uniquely the determinant of ${\rm cat}_5(f)$. Given a quintic $C$, we have that $\pi_1^{-1}(C) \cong \sigma_{22}(\nu_{10}(C))$. Hence, $\dim\mathcal{X} = \dim\mathcal{I} = \dim \bbP S_5 + \dim \sigma_{22}(\nu_{10}(C)) = 20 + 43 = 63$ inside $\bbP S_{10} = \bbP^{65}$. 
  \end{proof}

  \begin{claim}\label{claim:deg10_22}
    The general form $[f] \in \mathcal{X}$ has rank $22$.
  \end{claim}
  \begin{proof}
    By Alexander-Hirschowitz Theorem (Theorem \ref{thm:ah}), 
    \[
      \dim\sigma_{21}(\nu_{10}(\bbP^2)) = 21 \cdot 2 + 20 = 62,
    \] 
    i.e., $\sigma_{21}(\nu_{10}(\bbP^2))$ has codimension $3$. Hence, by Claim \ref{claim:deg10_b}, $\mathcal{X} \cap \sigma_{21}(\nu_{10}(\bbP^2))$ is a proper subvariety of~$\mathcal{X}$.
  \end{proof}
  
  Consider $\mathcal{Y} \subset \mathcal{X}$ the set of degree-$10$ forms that are constructed as in Claim~\ref{claim:deg10_a}. Let ${\rm Hilb}^\circ_{22}(\bbP^2)$ be the open subset of the Hilbert scheme of $22$ points in $\bbP^2$ and let ${\rm Hilb}^\circ_{22,5}(\bbP^2)$ denote the set of all sets of $22$ general points lying on a quintic of $\bbP^2$. Then, consider the incidence variety
    \[
      \mathcal{J} = \{(Z_1,Z_2,[f]) ~:~ \langle \nu_{10}(Z_1) \rangle \cap \langle \nu_{10}(Z_2) \rangle = [f]\} \subset {\rm Hilb}^\circ_{22,5}(\bbP^2) \times {\rm Hilb}^\circ_{22}(\bbP^2) \times \bbP S_{10}.
    \]
  By construction, $\mathcal{Y} = \pi_3(\mathcal{J})$. Moreover, from the construction described at the beginning of the proof, the projection $\pi_1$ is dominant, i.e., for a general choice of a set $Z_1$ of $22$ points on a quintic in $\bbP^2$, there exists $Z_2 \in {\rm Hilb}^\circ_{22}(\bbP^2)$ and $f \in S_{10}$ such that $(Z_1,Z_2,[f]) \in \mathcal{J}$. With a similar construction, we can also show that the projection $\pi_2$ is dominant. 
  \begin{claim}\label{claim:deg10_c}
    The projection $\pi_2 : \mathcal{J} \rightarrow {\rm Hilb}^\circ_{22}(\bbP^2)$ is dominant.
  \end{claim}
  \begin{proof}
    Let $Z_2$ be a general set of $22$ points in $\mathbb{P}^2$. Take a set $Y$ of three general points collinear on a general line $L$ and let $A = Z_2 \cup Y$. Since there are no quintics through $Z_2$, then the three points impose independent conditions on sextics through $Z_2$, see \cite[Lemma 1.9]{catalisano2007segre}. Hence, 
    \[
      \begin{matrix}
        h_{A} : & 1 & 3 & 6 & 10 & 15 & 21 & 25 & 25 & \cdots \\
        D h_A : & 1 & 2 & 3 & 4 & 5 & 6 & 4 & - 
      \end{matrix}
    \] 
    Let $\{G_1,G_2,G_3\}$ be a basis for the space of sextics passing through $A$, i.e., $[I_A]_6 = \langle G_1,G_2,G_3 \rangle$. The restriction of $[I_A]_6$ on $L$ defines a non-complete linear series $g^2_3$ away from the set of points $Y$. 
    The linear series on $L$ is generically base-point free, then it defines a regular map on $\bbP^2$ which maps $L$ into a singular cubic curve $C$. Indeed, since being base-point free is an open condition, this can be checked directly on a specific example with the support of the algebra software Macaulay2~\cite{M2}, see Section \ref{sec:M2_1}. For example, without loss of genericity, let $L = \{x_0 = 0\}$ and $Y = \{(0:1:0), (0:0:1), (0:1:1)\}$. Hence, for $i = 1,2,3$, the restriction of $G_i$ on $L$ is given by $G_i(0,x_1,x_2) = x_1x_2(x_1-x_2)H_i(x_1,x_2)$ and we consider the map $\varphi : L \rightarrow \bbP^2, p \mapsto (H_1(p):H_2(p):H_3(p))$. Let $C = \varphi(L)$. The general cubic $C$ constructed as above is nodal. Indeed, since being nodal is an open condition on the space of rational plane cubics, it is enough to exhibit one example in which the above construction provides a nodal curve. This is checked with the support of the algebra software Macaulay2 \cite{M2}, see Section \ref{sec:M2_1}. 
    
    Now, let $X$ be the set of two distinct points on $L$ which are pre-images of the singular point of $C$ and consider the set of five points $X \cup Y \subset L$. By construction, $X \cup Y$ does not impose independent conditions on the space of sextics passing through $A$. This implies that
    \[
      \begin{array}{*{21}c}
        h_{X \cup A} & : & 1 & 3 & 6 & 10 & 15 & 21 & 26 & 27 & 27 & \cdots \\
        D h_{X \cup A} & : & 1 & 2 & 3 & 4 & 5 & 6 & 5 & 1 & - 
      \end{array}
    \]  
    Now, we consider the linkage to $X \cup A$ via a general complete intersection of type $(7,7)$, i.e., $U = (X \cup A) \cup Z_1$. Then, $Z_1$ is a set of $22$ points with the following Hilbert function
    \begin{equation}\label{eq:deg10_Z2}
      \begin{array}{*{21}c}
        &  & $\tiny{0}$ & $\tiny{1}$ & $\tiny{2}$ & $\tiny{3}$ & $\tiny{4}$ & $\tiny{5}$ & $\tiny{6}$ & $\tiny{7}$ & $\tiny{8}$ & $\tiny{9}$ & $\tiny{10}$ & $\tiny{11}$ & $\tiny{12}$ \\
        Dh_{U}(i) & : & 1 & 2 & 3 & 4 & 5 & 6 & 7 & 6 & 5 & 4 & 3 & 2 & 1 \\
        Dh_{Z_1}(i) & : & 1 & 2 & 3 & 4 & 5 & 5 & 2 & - & - & - \\
        Dh_{X\cup A}(12-i) & : & & & & & & 1 & 5 & 6 & 5 & 4 & 3 & 2 & 1 & 
      \end{array}
    \end{equation}
    Again, by Peskine-Szpiro Theorem, see \cite[Theorem 6.4.4 and Example~6.4.5]{migliore1998introduction}), $Z_1$ is smooth and $Z_1 \cap Z_2 = \emptyset$. Hence, $Z_1 \in {\rm Hilb}^\circ_{22,5}(\bbP^2)$. Moreover, by construction, $U = (Z_1 \cup Z_2) \cup (X \cup Y)$. I.e., the union $Z_1 \cup Z_2$ is linked through $U$ to the set $X \cup Y$ of $5$ collinear points. Hence, by \eqref{eq:sum_Dh},
    \[
      \begin{array}{*{16}c}
        & & $\tiny{0}$ & $\tiny{1}$ & $\tiny{2}$ & $\tiny{3}$ & $\tiny{4}$ & $\tiny{5}$ & $\tiny{6}$ & $\tiny{7}$ & $\tiny{8}$ & $\tiny{9}$ & $\tiny{10}$ & $\tiny{11}$ & $\tiny{12}$  \\
        Dh_U(i) & :     & 1 & 2 & 3 & 4 & 5 & 6 & 7 & 6 & 5 & 4 & 3 & 2 & 1 &  \\
        Dh_{X \cup Y}(12-i) & :   & & & & & & & & & 1 & 1 & 1 & 1 & 1 \\
        Dh_{Z_1\cup Z_2}(i) & : & 1 & 2 & 3 & 4 & 5 & 6 & 7 & 6 & 4 & 3 & 2 & 1 &  
      \end{array}
    \]
      Hence, $Dh_{Z_1\cup Z_2}(11) = 1$ and, by Proposition \ref{prop:dim_intersection}, there exists $f \in S_{10}$ such that $(Z_1,Z_2,[f]) \in \mathcal{J}$.
  \end{proof}
  \begin{claim}
    $\mathcal{Y}$ is dense in $\mathcal{X}$. In particular, the general $[f] \in \mathcal{Y}$ has rank $22$.
  \end{claim}
  \begin{proof}
    Since the $22$-nd secant variety of $\nu_{10}(\bbP^2)$ and of $\nu_{10}(C)$, for any quintic $C \subset \bbP^2$, are non-defective, we have that the Terracini loci in ${\rm Hilb}^\circ_{22,5}(\bbP^2)$ and ${\rm Hilb}^\circ_{22}(\bbP^2)$ are proper sub-varieties. In particular, we can restrict the projection $\pi_3$ to the dense subspace of $\mathcal{J}$ given by the triplets $(Z_1,Z_2,[f])$ such that $Z_1$ and $Z_2$ do not belong to the corresponding Terracini locus. This implies that, for the general $[f] \in \mathcal{Y}$, the fiber $\pi_3^{-1}([f])$ is finite, see \cite[Proposition 6.2]{ballicochiantini2021}. In particular, $\dim \mathcal{Y} = \dim \mathcal{J}$. 
    
    Since $\pi_2$ is dominant, for a general $Z_2 \in {\rm Hilb}^\circ_{22}(\bbP^2)$, we have that $\dim \mathcal{J} = \dim {\rm Hilb}^\circ_{22}(\bbP^2) + \dim \pi_2^{-1}(Z_2)$. Now, $\dim {\rm Hilb}^\circ_{22}(\bbP^2) = 44$. Following the construction in Claim \ref{claim:deg10_c}, we see that $\dim \pi_2^{-1}(Z_2) = 19$. Indeed, it depends on the choice of three collinear points, that is five parameters ($2$ to fix a line and $3$ to fix three points on it), and the choice of the general complete intersection of two septics, that is the choice of a pencil in the $8$-dimensional projective space of septics through the set of $27$ points $A$, i.e., other $14 = 2(9-2)$ parameters. In particular, $\dim \mathcal{J} = 63 = \dim \mathcal{X}$. Since $\mathcal{X}$ is irreducible, we conclude the first part of the statement. The second part of the statement follows directly from Claim \ref{claim:deg10_22}.
  \end{proof}
  Therefore, the general element of $\mathcal{Y}$ evinces the statement of the theorem.
\end{proof}
\begin{remark}\label{rmk:remark_22}
  In the proof of Claim \ref{claim:deg10_b} we observed that, given a general quintic curve $C \subset \bbP^2$, the general degree-$10$ form in $\langle \nu_{10}(C) \rangle = \bbP^{44}$ has rank $23$ with respect to the curve. However, since the general rank of degree-$10$ ternary forms is equal to $22$ by Alexander-Hirschowitz Theorem (Theorem \ref{thm:ah}), we also deduce that the general form in $\langle \nu_{10}(C) \rangle$ has rank $22$ with respect to the Veronese variety $\nu_{10}(\bbP^2)$. In other words, there exist degree-$10$ ternary forms whose apolar ideal has minimal generators in degree five but minimal Waring decompositions are given by sets of points whose ideal does not involve such minimal generators. 
\end{remark}

\subsection{Second example. Degree-$13$ ternary forms of rank $30$.}\label{ssec:example_2}

\begin{theorem}\label{thm:example_2}
  There exists a form $f \in S_{13}$ of rank $30$ having two sets of points computing its rank and admitting different Hilbert functions and regularities. 
\end{theorem}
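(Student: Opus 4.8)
The plan is to mirror the strategy of Theorem \ref{thm:example_1}: to build, by liaison, two disjoint sets $Z_1, Z_2 \subset \bbP^2$ of $30$ points with \emph{different} first-difference Hilbert functions
\[
  Dh_{Z_2} : 1,2,3,4,5,6,7,2 \qquad\text{and}\qquad Dh_{Z_1} : 1,2,3,4,5,6,7,1,1,
\]
so that $Z_2$ has regularity $7$ and $Z_1$ has regularity $8$, and to arrange them so that $\langle \nu_{13}(Z_1)\rangle \cap \langle \nu_{13}(Z_2)\rangle$ is a single point $[f]$. Both functions reach $h(6)=28=\dim S_6$, i.e. neither set lies on a sextic; this feature will be essential for the rank estimate. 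By Proposition \ref{prop:dim_intersection}, the existence of such an $f$ is equivalent to $h^1_{Z_1\cup Z_2}(13)=1$, i.e. to $Z_1\cup Z_2$ having regularity $14$.

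For the construction I would start from a special set $Z_1$ of $30$ points of regularity $8$ lying on a general septic $C$, choose an auxiliary set $W$ of points \emph{on a conic} (rather than on a line, as in Theorem \ref{thm:example_1}), and let $X$ be a complete intersection of $C$ with a general degree-$11$ curve through $A := Z_1 \cup W$. Defining $Z_2$ as the residue of $A$ in $X$, the set $Z_1 \cup Z_2$ is the residue of $W$ in $X$; the linkage formula \eqref{eq:sum_Dh} then computes $Dh_{Z_2}$ and $Dh_{Z_1\cup Z_2}$ from $Dh_X$ and, respectively, $Dh_A$ and $Dh_W$. With $|W|=17$ and $X$ of type $(7,11)$ one checks that $h^1_{Z_1\cup Z_2}(13)=1$ while $h_{Z_1\cup Z_2}(6)=28$, so that $Z_2$ has regularity $7$ and lies on no sextic. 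As in Theorem \ref{thm:example_1}, the Peskine--Szpiro theorem guarantees that $Z_2$ is reduced and disjoint from $Z_1$, and Macaulay2 (Section \ref{sec:M2}) is used to certify the open conditions (reducedness, general position of $W$ on the conic, and the resulting Hilbert functions) on an explicit example.

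Granting the construction, the rank of $f$ equals $30$. The bound $\rk(f)\le 30$ is immediate, since $Z_2$ is an apolar set of $30$ points. For the reverse inequality I would argue by Cayley--Bacharach. Suppose $\rk(f)=s\le 29$ and let $W'$ be a minimal apolar set, $|W'|=s$; after reducing to the disjoint case, $T:=Z_2\cup W'$ satisfies $CB(13)$ by Proposition \ref{prop:CB}. Since $Z_2\subseteq T$, Lemma \ref{lemma:HF_properties} gives $h_T(6)\ge 28$ and $h_T(7)\ge 30$, while $h_T(6)\le \dim S_6 = 28$ forces $h_T(6)=28$. Applying Theorem \ref{thm:CB_implies} with $i=6$ yields $28 = h_T(6) \le h_T(14)-h_T(7)$, whence $h_T(14)\ge 28+h_T(7)\ge 58$; since $h_T(14)\le |T| = 30+s$, this already gives $s\ge 28$. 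The cases $s=28,29$ are then excluded numerically: in each one the inequalities become tight enough to force $Dh_T(7)\le 3$, so that $Dh_T$ is non-increasing from degree $7$ on by Lemma \ref{lemma:HF_properties}; but then the seven values $Dh_T(8),\dots,Dh_T(14)$, each at most $3$, cannot sum to $h_T(14)-h_T(7)\ge 28$. This contradiction gives $\rk(f)=30$.

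Finally, since $\rk(f)=30$ and both $Z_1$ and $Z_2$ are apolar sets of exactly $30$ points, each computes the rank of $f$ (in particular each is automatically non-redundant), and they have different Hilbert functions and regularities, which proves the theorem. I expect the main obstacle to be the construction itself: producing a union $Z_1\cup Z_2$ of regularity $14$ (so that a common form exists) while keeping both pieces \emph{off every sextic}, since high regularity of the union tends to force a low-degree curve through it, which would destroy the equality $h_T(6)=28$ on which the rank lower bound rests. Placing the linked set $W$ on a conic and forcing the septic into the complete intersection is what resolves this tension, and verifying that the resulting individual Hilbert functions are exactly the two displayed above—together with the genericity needed for the dominance and density arguments analogous to Claim \ref{claim:deg10_c}—is the delicate part.
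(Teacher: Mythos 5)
Your overall strategy (liaison to produce two $30$-point apolar sets with different regularities, then a Cayley--Bacharach argument for the rank lower bound) is in the spirit of the paper, but your specific construction contains a fatal obstruction. You take $Z_1$ on a septic $C$ and define $X$ as the complete intersection of $C$ with a degree-$11$ curve through $A=Z_1\cup W$, where $W$ is a set of $17$ points on a conic $Q$. For $X$ to contain $A$ you need $W\subset C$, but by B\'ezout $|Q\cap C|\le 14<17$ unless $Q$ is a component of $C$; so either the configuration does not exist or $C=Q\cup(\text{quintic})$, which destroys every ``general position'' assumption on which your Hilbert function computations rest. (The same obstruction reappears intrinsically: your target $Dh_{Z_2}=1,2,3,4,5,6,7,2$ forces, via \eqref{eq:sum_Dh}, $h_A(7)=35$, i.e.\ $A$ lies on a septic, and any septic through the $17$ points of $W$ must contain $Q$.) Moreover the starting set $Z_1$ with $Dh_{Z_1}=1,2,3,4,5,6,7,1,1$ is asserted rather than constructed: $30$ general points on a general septic have $Dh=1,2,3,4,5,6,7,2$ and regularity $7$, not $8$, so $Z_1$ must be special in a way you never specify, and the claimed $Dh_A$ (with $Dh_A(8)=7$, $Dh_A(9)=5$) is likewise unjustified. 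Since the entire theorem hinges on exhibiting the configuration, this is a genuine gap, not a routine verification.

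By contrast, your rank lower bound is correct and self-contained \emph{granting} a set $Z_2$ with $Dh_{Z_2}=1,2,3,4,5,6,7,2$: the chain $h_T(6)=28$, $h_T(14)\ge 28+h_T(7)\ge 58$ via Theorem~\ref{thm:CB_implies} at $i=6$, and the exclusion of $s=28,29$ using Lemma~\ref{lemma:HF_properties}(10), is a clean variant of the paper's argument (which instead works with the set \emph{lying on} a sextic, applies the Cayley--Bacharach inequality at $i=5$, and exploits $Dh_{Z_1}(7)+Dh_{Z_1}(8)=3$). The paper's construction is also considerably simpler than yours and avoids your obstruction entirely: it needs no auxiliary set on a conic, but performs two successive linkages --- $12$ general points on a cubic are linked through a $(6,7)$ complete intersection to $Z_1$ with $Dh_{Z_1}=1,2,3,4,5,6,6,2,1$, and $Z_1$ is then linked through a $(6,10)$ complete intersection $Y$ to $Z_2$ with $Dh_{Z_2}=1,2,3,4,5,6,5,4$; the common degree-$13$ form exists because $Dh_Y(14)=1$. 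If you want to salvage your version, you would need to redesign the construction so that the auxiliary linked set is compatible with the low-degree curves that your target Hilbert functions force through $A$.
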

\begin{proof}
Let $A$ be a set of $12$ general points lying on a cubic. I.e.,
\[
  \begin{matrix}
    h_A : & 1 & 3 & 6 & 9 & 12 & 12 & \cdots \\
    Dh_A : & 1 & 2 & 3 & 3 & 3 & - 
  \end{matrix}
\]
Then, we consider the set $Z_1$ of $30$ points linked to $A$ through a complete intersection $X = A \cup Z_1$ of $42$ points defined by a general sextic and a septic. I.e., by \eqref{eq:sum_Dh},
\[
  \begin{array}{*{15}c}
    Dh_X :     & 1 & 2 & 3 & 4 & 5 & 6 & 6 & 5 & 4 & 3 & 2 & 1 & - \\
    Dh_{Z_1} : & 1 & 2 & 3 & 4 & 5 & 6 & 6 & 2 & 1 & - 
  \end{array}
\]
We link $Z_1$ to another set $Z_2$ of $30$ points through a complete intersection $Y = Z_1 \cup Z_2$ of $60$ points defined by a sextic and degree-$10$ form. I.e., by \eqref{eq:sum_Dh},
\begin{equation}\label{eq:HF_example_deg13}
  \begin{array}{*{20}c}
    Dh_Y :     & 1 & 2 & 3 & 4 & 5 & 6 & 6 & 6 & 6 & 6 & 5 & 4 & 3 & 2 & 1 & - \\
    Dh_{Z_1} : & 1 & 2 & 3 & 4 & 5 & 6 & 6 & 2 & 1 & - \\
    Dh_{Z_2} : & 1 & 2 & 3 & 4 & 5 & 6 & 5 & 4 & - 
  \end{array}
\end{equation}
We immediately observe that $Z_1$ and $Z_2$ are both apolar to a form of degree $13$. Indeed, from the first difference of $Y = Z_1 \cup Z_2$ in \eqref{eq:HF_example_deg13}, we have $h^1_{14}(Z_1 \cup Z_2) = 1$. By Proposition \ref{prop:dim_intersection}, $\langle \nu_{13}(Z_1) \rangle \cap \langle \nu_{13}(Z_2) \rangle = [f]$ for some degree $13$ form $f$. 
\begin{claim}\label{claim:deg13}
  $f$ has rank $30$.
\end{claim}
\begin{proof}
  Let $Z_1$ be as above and $Z' \subset \bbP^n$ be another set of points apolar to~$f$. 
  
  First, assume that $Z_1 \cap Z' = \emptyset$ and let $\overline{Z} = Z_1 \cup Z'$. By Lemma~\ref{lemma:HF_properties}-(9) since $\sum_{i=0}^5 Dh_{Z_1}(i) = 21$ then also $\sum_{i=0}^5 Dh_{\overline{Z}}(i) = 21$. By Proposition \ref{prop:CB}, $\overline{Z}$ satisfies $CB(13)$ and then, by Theorem \ref{thm:CB_implies},
  \[
    21 = \sum_{i=0}^5 Dh_{\overline{Z}}(i) \leq \sum_{i = 9}^{14} Dh_{\overline{Z}}(i).
  \]
  In particular, we deduce that $Dh_{\overline{Z}}(5) = Dh_{\overline{Z}}(9) = 6$ and, by Lemma \ref{lemma:HF_properties}(10), we have $Dh_{\overline{Z}}(i) \geq 6$ for $i \in \{6,7,8\}$. I.e.,
  \[
    \begin{array}{*{21}c}
      &  & $\tiny{0}$ & $\tiny{1}$ & $\tiny{2}$ & $\tiny{3}$ & $\tiny{4}$ & $\tiny{5}$ & $\tiny{6}$ & $\tiny{7}$ & $\tiny{8}$ & $\tiny{9}$ & $\tiny{10}$ & $\tiny{11}$ & $\tiny{12}$ & $\tiny{13}$ & $\tiny{14}$ & \\
      Dh_{\overline{Z}}(i) & : & 1 & 2 & 3 & 4 & 5 & 6 & \geq 6 & \geq 6 & \geq 6 & 6 & 5 & 4 & 3 & 2 & 1 \\
      Dh_{Z_1}(i) & : & 1 & 2 & 3 & 4 & 5 & 6 & 6 & 2 & 1 & - \\
      Dh_{Z'}(14-i) & : & & & & & & & ? & ? & ? & 6 & 5 & 4 & 3 & 2 & 1 & 
    \end{array}
  \]
  Since $Dh_{Z_1}(7) + Dh_{Z_1}(8) = 3$, we conclude that $Dh_{\overline{Z}}(7) + Dh_{\overline{Z}}(8) \geq 9$. Hence,
  \[
    \ell(Z') \geq \sum_{i=7}^{14} Dh_{Z'}(i) \geq 9 + 21 = 30. 
  \]
  Assume now that $Z_1 \cap Z' \neq \emptyset$. As observed in \cite[Remark 3.3]{angelini2020identifiability}, if $Z'' = Z' \smallsetminus (Z' \cap Z_1)$, then there exists a form $f_0 \in S_{13}$ such that $[f] \in \langle \nu_{13}(Z_1) \rangle \cap \langle \nu_{13}(Z'') \rangle$. Indeed, if $Z_1 = \{\ell_1,\ldots,\ell_{30}\}$ and $Z' = \{\ell_1,\ldots,\ell_k,m_{k+1},\ldots,m_r\}$, then,
  \[
    f = \sum_{i=1}^{30}a_i\ell_i^{13} = \sum_{i=1}^kb_i\ell_i^{13} + \sum_{i=k+1}^rb_i m_i^{13}
  \]
  and
  \[
    f_0 = \sum_{i=1}^k(a_i-b_i)\ell_i^{13} + \sum_{i=k+1}^{30}a_i \ell_i^{13} = \sum_{i=k+1}^rb_i m_i^{13}.
  \]
  Therefore, we can proceed with the same argument as before by considering $f_0$ instead of $f$ and $\overline{Z} = Z_1 \cup Z''$ to deduce that $\ell(Z_1) \geq \ell(Z'') \geq 30$. 
\end{proof}
Therefore, we have that $Z_1$ and $Z_2$ have different Hilbert function and regularity by \eqref{eq:HF_example_deg13} and both compute the rank of $f$ by Claim \ref{claim:deg13}. This concludes the proof.
\end{proof}

\section{Explicit Macaulay2 computations}\label{sec:M2}
In this last section, we provide a code in the language of the algebra software Macaulay2 \cite{M2} to construct the examples presented in the previous section. 

~
\subsection{First example. Degree-$10$ ternary forms of rank $22$.}\label{sec:M2_1}
~

\begin{lstlisting}[language = Macaulay2]
i1 : k = QQ;
i2 : S = k[x,y,z];
i3 : -- consider a set of 22 general points
     IZ1 = intersect for i from 1 to 22 list 
                          minors(2,vars(S)||random(QQ^1,QQ^3));
i4 : degree IZ1, for i to 10 list hilbertFunction(i,IZ1)

o4 = (22, {1, 3, 6, 10, 15, 21, 22, 22, 22, 22, 22})

i5 : -- consider the set of points Y = {(0:1:0),(0:0:1),(0:1:1)} on the line x = 0 and the union A = Z1 u Y
     IY = intersect(ideal(x,z), ideal(x,y), ideal(x,y-z));
i6 : IA = intersect(IZ1,IY);
i7 : -- take the degree-6 part of the ideal of A
     G = ideal super basis(6,IA);
i8 : -- take the ideal generated by the restriction of G over the line x = 0 away from the set of points Y
     R = k[y,z];
i9 : G0 = sub(G,R);
i10 : G' = G0 : sub(IY,R);
i11 : -- G' is then defined by three cubics: let C be the cubic curve as image of the line L via the map defined by G'
      phi = map(R,S,first entries mingens G');
i12 : C = ker phi;
i13 : -- We compute the singular locus and we check that it is nodal
      VC = Proj (S/C);
i14 : singVC = singularLocus VC;
i15 : codim singVC, degree singVC

o15 = (2, 1)

i16 : IO = ideal singVC;
i17 : -- We compute the pre-image X of the singular point:
      -- that is a union of two simple points
      T = QQ[a,b,c,y,z, MonomialOrder => Eliminate 3];
i18 : psi = map(T,S,{a,b,c});
i19 : J = psi(IO) + ideal(a-sub(G'_0,T),b-sub(G'_1,T),c-sub(G'_2,T));
i20 : use S;
i21 : IX=saturate(ideal(x)+sub(ideal selectInSubring(1,gens gb J),S));
i22 : dim IX, degree IX, IX == radical IX

o22 = (1, 2, true)

i23 : -- Now, the union A u X is a set of 27 points
      IA' = intersect(IA,IX);
i24 : degree IA', for i to 10 list hilbertFunction(i,IA')

o24 = (27, {1, 3, 6, 10, 15, 21, 26, 27, 27, 27, 27})

i25 : -- Construct a general c.i. of type (7,7)
      B7 = super basis(7,IA');
i26 : C1 = B7 * random(QQ^(numcols B7), QQ^1);
i27 : C2 = B7 * random(QQ^(numcols B7), QQ^1);
i28 : IU = ideal(C1,C2);
i29 : degree IU, for i to 12 list hilbertFunction(i,IU)

o29 = (49, {1, 3, 6, 10, 15, 21, 28, 34, 39, 43, 46, 48, 49})

i30 : -- Consider the linkage to A' through U
      IZ2 = IU : IA';
i31 : degree IZ2, for i to 10 list hilbertFunction(i,IZ2)

o31 = (22, {1, 3, 6, 10, 15, 20, 22, 22, 22, 22, 22})

i32 : -- we compute now the intersection of their linear spans
      perpSpace = ideal super basis(10,IZ1)+ideal super basis(10,IZ2); 
i33 : B10 = basis(10,S);
i34 : f = (B10 * gens ker diff(B10, transpose mingens perpSpace))_0_0;
i35 : -- Z1 and Z2 are apolar to f and have different HFs
      fperp = inverseSystem f; -- computes the apolar ideal of f 
i36 : isSubset(IZ1,fperp), isSubset(IZ2,fperp), 
          netList {for i to 14 list hilbertFunction(i,IZ1), for i to 14 list hilbertFunction(i,IZ2)}
                   +-+-+-+--+--+--+--+--+--+--+--+--+--+--+--+
o36 = (true, true, |1|3|6|10|15|21|22|22|22|22|22|22|22|22|22|)
                   +-+-+-+--+--+--+--+--+--+--+--+--+--+--+--+
                   |1|3|6|10|15|20|22|22|22|22|22|22|22|22|22|
                   +-+-+-+--+--+--+--+--+--+--+--+--+--+--+--+

\end{lstlisting}

~
\subsection{Second example. Degree-$13$ ternary forms of rank $30$.}
~

\begin{lstlisting}[language = Macaulay2]
i1 : k = QQ;	   
i2 : S = k[x,y,z];

i3 : -- consider 12 random points on the rational cubic xz^2 - y^3 = 0
     a = entries random(k^12,k^2);
i4 : A = (p -> {p_0^3, p_0*p_1^2, p_1^3}) \ a;
i5 : IA = intersect ((p -> minors(2,matrix{{x,y,z},p})) \ A);
i6 : degree IA, for i to 5 list hilbertFunction(i,IA)

o6 = (12, {1, 3, 6, 9, 12, 12})

i7 : -- consider a c.i. X of type (6,7) containing A
     B6 = super basis(6,IA);    B7 = super basis(7,IA);    
i9 : F1 = B6 * random(QQ^(numcols B6), QQ^1);
i10 : F2 = B7 * random(QQ^(numcols B7), QQ^1);
i11 : IX = ideal(F1,F2);
i12 : degree IX, for i to 13 list hilbertFunction(i,IX)

o12 = (42, {1, 3, 6, 10, 15, 21, 27, 32, 36, 39, 41, 42, 42, 42})

i13 : -- consider the link to A through X
      IZ1 = IX : IA;
i14 : degree IZ1, for i to 13 list hilbertFunction(i,IZ1)

o14 = (30, {1, 3, 6, 10, 15, 21, 27, 29, 30, 30, 30, 30, 30, 30})

i15 : -- consider a c.i. Y of type (6,10) containing Z1
      B6 = super basis(6,IZ1);    B10 = super basis(10,IZ1);    
i17 : G1 = B6 * random(QQ^(numcols B6), QQ^1);
i18 : G2 = B10 * random(QQ^(numcols B10), QQ^1);
i19 : IY = ideal(G1,G2);
i20 : degree IY, for i to 14 list hilbertFunction(i,IY)

o20 = (60, {1, 3, 6, 10, 15, 21, 27, 33, 39, 45, 50, 54, 57, 59, 60})

i21 : -- consider the link to Z1 through Y
      IZ2 = IY : IZ1;
i22 : degree IZ2, for i to 13 list hilbertFunction(i,IZ2)

o22 = (30, {1, 3, 6, 10, 15, 21, 26, 30, 30, 30, 30, 30, 30, 30})

i23 : -- we compute now the intersection of their linear spans
      perpSpace = ideal super basis(13,IZ1)+ideal super basis(13,IZ2);
i24 : B13 = basis(13,S);
i25 : f = (B13 * gens ker diff(B13, transpose mingens perpSpace))_0_0; 

i26 : -- note that Z1 and Z2 are apolar to f
      -- and have different Hilbert functions and regularity
      fperp = inverseSystem f; -- computes the apolar ideal of f
i27 : isSubset(IZ1,fperp), isSubset(IZ2,fperp), 
          netList {for i to 14 list hilbertFunction(i,IZ1), 
                   for i to 14 list hilbertFunction(i,IZ2)}

                   +-+-+-+--+--+--+--+--+--+--+--+--+--+--+--+
o27 = (true, true, |1|3|6|10|15|21|27|29|30|30|30|30|30|30|30|)
                   +-+-+-+--+--+--+--+--+--+--+--+--+--+--+--+
                   |1|3|6|10|15|21|26|30|30|30|30|30|30|30|30|
                   +-+-+-+--+--+--+--+--+--+--+--+--+--+--+--+
\end{lstlisting}


\bibliographystyle{amsplain}
\bibliography{ACO_Waring_differentHF.bib}

\end{document}